\documentclass[12pt]{amsart}

\usepackage[
  margin=1in,
  includehead,          % count header height in the top margin
  headheight=14pt,      % bump this up if you see overlap/warnings
  headsep=10pt
]{geometry}
\usepackage[T1]{fontenc}
\usepackage[utf8]{inputenc}
\usepackage{lmodern}
\usepackage{microtype}
\linespread{1.06} % slightly looser leading for readability
\setlength{\parindent}{2em}

\usepackage{url}
\usepackage{graphicx, amssymb, amsmath, amsthm} % Required for inserting images
\usepackage{tikz-cd}
\usepackage{verbatim}
\usepackage{enumitem}
\usepackage{bm}
\usepackage{wrapfig}
\usepackage{needspace}   % preamble
\setlength{\columnsep}{0.8em}   % (optional) tighter side gap
\setlength{\intextsep}{0pt}

 % reset theorem numbering for each chapter
\theoremstyle{plain}
\theoremstyle{plain}
\newtheorem{thm}{Theorem}[section]

\theoremstyle{definition}
\newtheorem{defn}[thm]{Definition} % definition numbers are dependent on theorem numbers
\newtheorem{exmp}[thm]{Example}
\newtheorem*{remark}{Remark}

\newtheorem{cor}[thm]{Corollary}

\theoremstyle{lemma}

\theoremstyle{proposition}
\newtheorem{prop}[thm]{Proposition}

\theoremstyle{conjecture}
\newtheorem{conj}[thm]{Conjecture}

\title{The Adjoint Polynomial of a Polyhedral cone is a Covolume Polynomial}
\author{Guanxi Li}
\date{September 2025}
\usepackage{natbib}
\bibpunct{[}{]}{,}{n}{}{,}
\begin{document}

\maketitle
\begin{abstract}
In this paper, we show that the adjoint polynomial of a polyhedral cone equals the multidegree polynomial of the toric ideal with multigrading, both given by the vertex rays. This fact implies a conjecture of Aluffi, to the effect that the adjoint polynomial of a polyhedral cone in the positive orthant is a covolume polynomial.
\end{abstract}
\section{Introduction}
The adjoint polynomial of a polyhedral cone was studied in \cite{Ark17} and \cite{kohn19} for its connections to positive geometry and canonical forms. In \cite{Alu24}, the adjoint polynomial is studied as an example of covolume polynomial, in the case when the polyhedral cone shares a face with a full dimensional simplicial cone enclosing it.
The definition of the adjoint polynomial of a polyhdedral cone is the following:
\begin{defn}
Let $C$ be a pointed polyhedral cone in $\mathbb{R}^{d+1}$ with triangulation $T$. Let $V(C), V(\sigma)$ denote the set of vertex rays of $C,\sigma$. The adjoint polynomial of $C$ is the polynomial in $t_1, \dots, t_{d+1}$ defined by 
$$adj_C(t) = \sum_{\sigma\in T}vol(\sigma)\prod_{v\in V(C)\backslash V(\sigma)}(v\cdot t),$$
where $vol(\sigma)$ is the normalized volume, that is, the $d!$ multiple of the ordinary volume.

\end{defn}
 Let $C\subseteq \mathbb{R}^{d+1}$ be a polyhedral cone, whose cross section with the hyperplane $\{x_0=1\}$ is a polytope $P\subseteq \mathbb{R}^d.$ 
In this situation, there is the following conjecture.
\begin{conj}[\cite{Alu24}]
If the polytope $P$ lies in the positive orthant, then the adjoint polynomial of $C$ is a covolume polynomial.
\end{conj}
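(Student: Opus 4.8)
The plan is to derive the conjecture from the main identity of this paper, namely that $adj_C(t)$ is the multidegree polynomial of the toric ideal $I_A$ defined by the vertex rays $A=\{v_1,\dots,v_n\}$ of $C$. Granting that identity, the conjecture becomes the assertion that this multidegree is a covolume polynomial in the sense of \cite{Alu24}, and the whole point of the positivity hypothesis is to make that assertion hold. First I would translate the hypothesis geometrically: if $P\subseteq\mathbb{R}^d_{\ge 0}$ lies in $\{x_0=1\}$, then each vertex ray is $v_i=(1,p_i)$ with $p_i\in\mathbb{R}^d_{\ge 0}$, so $C=\mathrm{cone}(v_1,\dots,v_n)$ is contained in the positive orthant $O=\mathbb{R}^{d+1}_{\ge 0}$, itself a full–dimensional unimodular simplicial cone. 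This is the structure --- a simplicial cone enclosing $C$ --- that \cite{Alu24} exploits in the case where $C$ additionally shares a face with the enclosing cone; here $O$ encloses every positive $C$ but need not share a face with it, which is exactly why the general case must route through the multidegree identity.

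The mechanism I would use to pass from ``multidegree'' to ``covolume'' is Brion's formula for the Laplace transform of a cone, which is also what makes the volume-theoretic content of the main identity transparent. For $t$ in the interior of the dual cone one has
$$\int_{C}e^{-\langle y,t\rangle}\,dy=\sum_{\sigma\in T}\frac{vol(\sigma)}{\prod_{v\in V(\sigma)}(v\cdot t)},$$
since each simplicial piece $\sigma=\mathrm{cone}(v_{i_0},\dots,v_{i_d})$ contributes $|\det(v_{i_0},\dots,v_{i_d})|/\prod_{j}(v_{i_j}\cdot t)$, and for rays of height one this determinant is precisely the normalized volume $vol(\sigma)$. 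Clearing the common denominator $\prod_{i=1}^{n}(v_i\cdot t)$ turns the right-hand side into $\sum_{\sigma}vol(\sigma)\prod_{v\in V(C)\setminus V(\sigma)}(v\cdot t)=adj_C(t)$, while the left-hand side is, up to the same denominator, the $t\to 0$ leading term of the multigraded Hilbert series of $S/I_A$, i.e. its multidegree. Thus $adj_C(t)=\prod_{i}(v_i\cdot t)\cdot\int_{C}e^{-\langle y,t\rangle}\,dy$ is simultaneously the multidegree and the numerator of the volume generating function of $C$, with coefficients equal to the lattice covolumes $|\det(v_{i_0},\dots,v_{i_d})|$ of the subsystems spanned by the rays.

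Finally I would use the positivity hypothesis to realize this numerator as an honest covolume. Because $C\subseteq O$, I would compare generating functions via $\int_{C}=\int_{O}-\int_{O\setminus C}$, where $\int_{O}e^{-\langle y,t\rangle}\,dy=1/(t_1\cdots t_{d+1})$ and $O\setminus C$ is a finite union of polyhedral pieces. This decomposition exhibits the generating function of $C$ relative to the simplicial cone $O$, and it is this relative description --- with $O$ playing the role of Aluffi's enclosing simplicial cone --- that realizes $adj_C$ as a covolume polynomial per the definition in \cite{Alu24}. Positivity is precisely what guarantees a full-dimensional simplicial enclosure $O$ of $C$, so that the relative volume is well defined and finite on each slice; without it $C$ need not sit inside any such simplicial cone and the covolume description breaks down.

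The step I expect to be the main obstacle is this last matching. The multidegree is a priori an algebraic invariant --- the lowest-degree form of the $K$-polynomial of $I_A$, equivalently the equivariant class of the affine toric variety --- whereas a covolume polynomial is a geometric object. Reconciling them requires the Brion--Laplace identification to be legitimate, which forces some care: $C$ must be pointed for convergence, and when the semigroup $S_A$ fails to be normal one must check that passing to the normalization alters only lower-order lattice-point corrections, not the leading volume, so that the multidegree is unchanged. One then has to verify that the relative volume of $C$ inside $O$ agrees, after normalization, with Aluffi's definition of a covolume polynomial. Once this dictionary is pinned down, the combinatorial bookkeeping of $O\setminus C$ and of the triangulation $T$ is routine, since $adj_C$ is already known to be independent of $T$.
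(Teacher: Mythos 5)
There is a genuine gap, and it sits exactly at the step your proposal identifies as "the main obstacle" but never resolves. A covolume polynomial, by definition, is a limit of positive scalar multiples of polynomials $P_{[X]}(t)$ attached to classes $[X]$ of \emph{irreducible subvarieties} of a product of projective spaces. Your argument via Brion's formula and the decomposition $\int_C = \int_O - \int_{O\setminus C}$ re-expresses $adj_C(t)$ as the numerator of a volume generating function relative to the orthant, but at no point does it produce a subvariety $X$ (or a limiting sequence of such classes) realizing $adj_C$; "being a relative volume" is not the definition, and no theorem is invoked that converts one into the other. The paper closes this gap by a citation you never use: \cite{cid24} Theorem 3.5, which states that the multidegree polynomial of a prime ideal homogeneous with respect to an $\mathbb{N}^{d+1}$-grading is a covolume polynomial. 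That is where the positivity hypothesis actually enters --- it makes the $\mathcal{A}$-grading by the rays $v_i=(1,p_i)$ nonnegative, so the theorem applies to $I_{\mathcal{A}}$ --- not, as you suggest, to guarantee a simplicial cone enclosing $C$ so that relative volumes converge. Your reading of the role of positivity leads the proof down a path (volume-theoretic comparison with $O$) that cannot reach the algebro-geometric definition without substantial new input.

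A second, smaller gap: the conjecture is stated for arbitrary real polytopes in the positive orthant, while the multidegree identity (and your Laplace-transform bookkeeping about toric ideals and normality) only makes sense for integral ray configurations. The paper handles this by scaling to clear denominators in the rational case (the adjoint changes by a positive scalar, which is harmless since covolume polynomials are defined up to positive scaling and limits), and by approximating an irrational cone by rational ones, using the continuity of the adjoint polynomial (\cite{kohn19} Corollary 1.8) together with the fact that covolume polynomials are closed under limits. Your proposal would need both of these reductions as well; without them it proves at best the integral case, and without \cite{cid24} it does not prove even that.
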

We will recall the notion of covolume polynomial in Section 3.
In this paper, we prove the following:
\begin{thm}\label{main}
Let  $C$ be a polyhedral cone in as above. Let $\mathcal{A}$ be the point configuration corresponding to the generators of the cone. Furthermore, assume the points in $\mathcal{A}$ are $\mathbb{Z}^{d+1}$-points. Then, the adjoint polynomial of $C$ is the multidegree polynomial with respect to the $\mathcal{A}$-grading of the toric ideal $I_{\mathcal{A}}.$
\end{thm}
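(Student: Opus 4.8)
The plan is to compute the multidegree polynomial of $S/I_{\mathcal{A}}$ directly from its multigraded Hilbert series and to recognize the defining sum of $adj_C$ as the output of a Gröbner degeneration. Write $S = k[x_v : v \in V(C)]$, graded by $\mathbb{Z}^{d+1}$ via $\deg x_v = v$, and set $n = |V(C)|$. Because $C$ is pointed with cross-section $P \subseteq \{x_0 = 1\}$, every generator satisfies $x_0 > 0$, so the $\mathcal{A}$-grading is positive and the multidegree $\mathcal{C}(S/I_{\mathcal{A}}; t)$ is a well-defined homogeneous polynomial in $t_1, \dots, t_{d+1}$ of degree $\mathrm{codim}\, I_{\mathcal{A}} = n - (d+1)$; this already matches $\deg adj_C$. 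I would use the description of the multidegree as the lowest total-degree part of the $K$-polynomial $\mathcal{K}(S/I_{\mathcal{A}}; s)$ — the numerator of the Hilbert series over $\prod_{v}(1 - s^{v})$ — after the substitution $s_j \mapsto 1 - t_j$. Under this substitution $1 - s^{v} = (v \cdot t) + O(t^2)$, so the linear form attached to each variable $x_v$ is exactly $v \cdot t$, the same form appearing in $adj_C$.

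Next I would degenerate. Since $I_{\mathcal{A}}$ is $\mathcal{A}$-homogeneous and every monomial ideal is automatically $\mathcal{A}$-homogeneous, the standard monomials form a multigraded $k$-basis of both $S/I_{\mathcal{A}}$ and $S/\mathrm{in}_w(I_{\mathcal{A}})$; hence passing to an initial ideal $\mathrm{in}_w(I_{\mathcal{A}})$ for a generic weight $w$ preserves the multigraded Hilbert series, the $K$-polynomial, and the multidegree, giving $\mathcal{C}(S/I_{\mathcal{A}}; t) = \mathcal{C}(S/\mathrm{in}_w I_{\mathcal{A}}; t)$. By the theory of initial ideals of toric ideals (Sturmfels, \emph{Gr\"obner Bases and Convex Polytopes}), for generic $w$ the radical of $\mathrm{in}_w(I_{\mathcal{A}})$ is the Stanley--Reisner ideal of the regular triangulation $\Delta_w$ of $\mathcal{A}$; its top-dimensional minimal primes are precisely $P_\sigma = \langle x_v : v \in V(C) \setminus V(\sigma) \rangle$, one for each maximal simplex $\sigma \in \Delta_w$.

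I would then invoke additivity of the multidegree over top-dimensional minimal primes,
\[
\mathcal{C}(S/\mathrm{in}_w I_{\mathcal{A}}; t) = \sum_{\sigma \in \Delta_w} \mathrm{mult}_{P_\sigma}(\mathrm{in}_w I_{\mathcal{A}}) \cdot \mathcal{C}(S/P_\sigma; t).
\]
Each $P_\sigma$ is generated by the regular sequence of variables $x_v$ with $v \notin V(\sigma)$, each of $\mathcal{A}$-degree $v$, so its multidegree is the product of the associated linear forms, $\mathcal{C}(S/P_\sigma; t) = \prod_{v \in V(C)\setminus V(\sigma)} (v \cdot t)$ — exactly the product in $adj_C$. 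The multiplicity of $\mathrm{in}_w(I_{\mathcal{A}})$ along $P_\sigma$ is the normalized volume $vol(\sigma)$ (again Sturmfels). Assembling these,
\[
\mathcal{C}(S/I_{\mathcal{A}}; t) = \sum_{\sigma \in \Delta_w} vol(\sigma) \prod_{v \in V(C)\setminus V(\sigma)} (v\cdot t),
\]
which is $adj_C$ computed on the regular triangulation $\Delta_w$.

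Two points are where I expect the real work to sit. First, I must match normalizations: the multiplicity of the toric initial ideal along $P_\sigma$ is a lattice index, and I need to verify it agrees on the nose with the ``$d!$ times ordinary volume'' normalization used to define $vol(\sigma)$, taking account of how the lattice generators $v$ meet the hyperplane $\{x_0 = 1\}$. Second, the definition makes $adj_C$ depend on a triangulation $T$, whereas the degeneration produces only regular triangulations $\Delta_w$; to reach the statement for an arbitrary $T$ I would invoke (or separately establish) the triangulation-independence of the adjoint polynomial, so that its value on any $T$ equals its value on a regular $\Delta_w$, which the above identifies with the multidegree. I regard the multiplicity-equals-volume bookkeeping as the principal obstacle, the remainder being an assembly of standard facts about multidegrees and toric Gr\"obner degenerations.
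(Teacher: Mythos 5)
Your proposal is correct and follows essentially the same route as the paper: Gröbner degeneration to the initial ideal $\mathrm{in}_{\omega}(I_{\mathcal{A}})$, additivity of the multidegree over the minimal primes indexed by maximal simplices of the regular triangulation, Sturmfels' multiplicity-equals-normalized-volume theorem, and the product-of-linear-forms evaluation on coordinate subspace ideals. The two difficulties you flag are precisely where the paper leans on citations or leaves matters implicit: the multiplicity/volume matching is quoted verbatim as Sturmfels' Theorem 8.8, while the triangulation-dependence point is not addressed --- the paper's argument, like yours, establishes the identity only for the regular triangulation $\Delta_{\omega}$.
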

We will recall the definition of $I_{\mathcal{A}}$ in Section 2. The conjecture follows from \cite{cid24} Theorem 3.5,  which states that all multidegree polynomials with arbitrary $\mathbb{N}^{d+1}$-grading are covolume polynomials.
\vspace{5mm}
\section*{Acknowledgments}
The author thanks Paolo Aluffi for suggesting the problem and for countless insightful conversations. The author is supported by the Caltech Summer Undergraduate Research Fellowship (SURF) program and the Karen Roberts and Jim Sagawa SURF fellowship.
\vspace{5mm}
\section{Initial Toric Ideal and Triangulation}
Given a point configuration $\mathcal{A}:=\{v_1,v_2,\dots, v_n\}\subset \mathbb{Z}^{d+1}$,  
let $$\varphi: (\mathbb{C}^*)^{d+1}\rightarrow  (\mathbb{C}^*)^n,$$
$$p\mapsto (p^{v_1},p^{v_2},\dots, p^{v_n}), \text{   where } p^{v_i} \text{ denotes } p_{0}^{v_{i_{0}}}p_1^{v_{i_1} }\dots p_{d}^{v_{i_d}}.$$
The toric ideal $I_{\mathcal{A}}$ corresponding to the configuration is the defining ideal of the variety $im(\varphi)$.
Equivalently, the map above induces the ring homomorphism
$$\varphi^*: k[x_1^{\pm},x_2^{\pm},\dots,x_n^{\pm}]\rightarrow k[y_0^{\pm}, y_1^{\pm},y_2^{\pm},\dots y_d^{\pm}],$$
given by $x_i \mapsto y^{v_i}$, then $I_{\mathcal{A}} = ker(\varphi^*).$
More specifically, consider the $((d+1)\times n)$-matrix $A:= [v_1,v_2,\dots, v_n]$, then $I_{\mathcal{A}} = \langle x^{u}-x^v| Au=Av\rangle$ (\cite{MR96} Lemma 4.1).  

The polynomial ring $k[x_1, \dots, x_n]$ has a natural multigrading such that the ideal $I_{\mathcal{A}}$ is homogeneous with respect to that multigrading. This is the $\mathcal{A}$-grading defined as follows:
$deg(x_i) = v_i$.
In our situation, we are assuming the point configuration lives in $\{1\}\times\mathbb{Z}^d.$
\vspace{5mm}

A simplicial complex $\Delta$ is a set of subsets of $[n]:= \{1,2,\dots, n \},$ such that for $\sigma,\tau \subseteq [n],$ if $\sigma\supseteq\tau$ and $\sigma \in \Delta,$ then $\tau \in \Delta.$ If $\sigma, \tau\in \Delta,$ then $\sigma\cap \tau \in \Delta$. Elements of $\Delta$ are called faces of the simplicial complex.
Given a simplicial complex $\Delta$ on the vertex set $[n]$, there is a natural way to associate a square-free ideal $I_{\Delta}:= \langle x^{\tau}| \tau \notin \Delta \rangle$ to $\Delta$. Equivalently, one can also write the ideal in terms of the intersection of monomial prime ideals: $I_{\Delta} = \bigcap_{\sigma\in \Delta }\langle x_i|i\in \{1,2,\dots n\}\backslash \sigma\rangle$ (\cite{MR05} Theorem 1.7.)  This is the Stanley-Reisner ideal of the simplicial complex.

Conversely, consider an ideal $I\subseteq k[x_1,x_2,\dots, x_n]$ and a fixed term order given by the weight vector $\omega\in \mathbb{R}^n$(i.e, given monomials $x^{\alpha}, x^{\beta}$, then $x^{\alpha}\prec x^{\beta}$ if and only if $\omega\cdot\alpha < \omega\cdot\beta$). One obtains a simplicial complex on $\{1,2,\dots n\}$, called the initial complex $\Delta_{\omega}(I):=\{\sigma\ | \ \forall f\in I, \text{ the support of  }in_{\omega}(f) \text{ is not } \sigma\}.$ . Equivalently, the initial complex is  the simplicial complex whose Stanley-Reisner ideal is the radical of $in_{\omega}(I)$ (\cite{MR05} Chapter 8).

\vspace{5mm}
Given a point configuration $\mathcal{A}=\{v_1,v_2\dots, v_n\}$,  and a generic weight vector $\omega = (\omega_1, \omega_2, \dots \omega_n)$, the regular triangulation of $\mathcal{A}$, denoted by $\Delta_{\omega}$, is defined as follows:
Consider $\hat{\mathcal{A}}= \{(v_1,\omega_1), (v_2,\omega_2), \dots (v_n,\omega_n)\}\subseteq \mathbb{N}^{d+2}$, the polyhedral cone  $cone(\hat{\mathcal{A}})$, then consider the collection of all the faces of the cone whose supporting hyperplane lower bounds $cone(\hat{\mathcal{A}})$ with respect to the last coordinate. This collection forms a polyhedral complex, and the projection of this polyhedral complex with respect to the last coordinate is the triangulation $\Delta_{\omega}$. We can identify $\{v_1,v_2,\dots, v_n\}$ with $[n].$ Then $\Delta_{\omega}$ can be viewed as a simplicial complex on $[n].$
\newline
\par More precisely, for every generic choice of the weight vector $\omega$, the triangulation is defined as follows:
the subset $\{i_0,\dots, i_d\}$ is a face of $\Delta_{\omega}$ if there is a vector $c=(c_1,\dots,c_d)$ such that:
$$v_j\cdot c = \omega_j \text{  if } j \in \{i_0,\dots, i_d\} \text{ and}$$
$$v_j\cdot c < \omega_j \text{  if  } j \notin \{i_0,\dots, i_d\}.$$

\begin{exmp}\label{example}
%\leavevmode
%\Needspace{14\baselineskip} % if not enough space left on page, start a new one

%\begin{wrapfigure}[14]{l}{0.38\textwidth} % 14 ≈ number of lines the wrap spans
  %\vspace{-0.7\baselineskip}              % nudge upward to sit right under the title
 % \centering
  %\includegraphics[width=\linewidth]{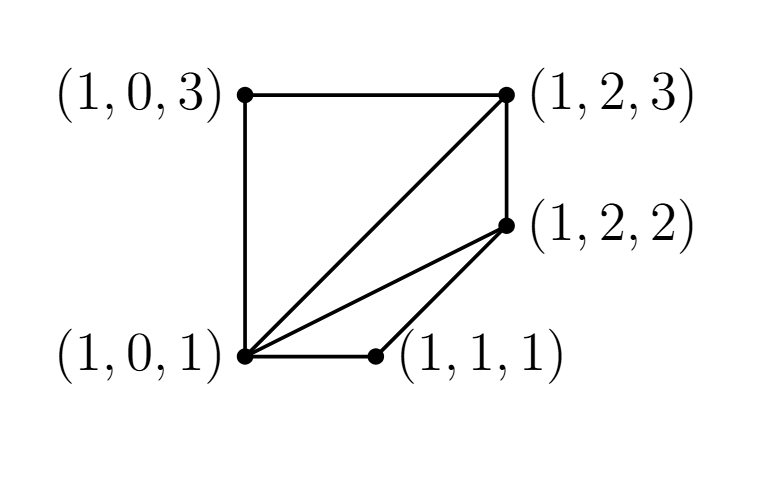}
%\end{wrapfigure}

Consider the pentagon in $\mathbb{R}^2$ given by the points
$$v_1 = (1,0,1), v_2 = (1,1,1), v_3=(1,2,2), v_4 = (1,2,3), v_5 = (1,0,3)$$

Using Macaulay2 \cite{IC}, one is able to compute the ideal $I_{\mathcal{A}} = \langle x_3^2x_5-x_1x_4^2, x_2^2x_4-x_1x_3^2, x_2^2x_5-x_1^2x_4\rangle$.
\newline
\begin{center}
\vspace{-0.7\baselineskip}
\centering
  \includegraphics[width=0.36\textwidth]{polygon.png}
  
\end{center}

\par Consider the weight vector $\omega= (0,1,0,0,1)$, we have the polytope 
\newline
$$P=conv((0,1,0),(1,1,1), (2,2,0),(2,3,0),(0,3,1)),$$
and denote each vertex $\tilde{v}_i$,  then the corresponding lower faces are $$conv(\tilde{v}_1,\tilde{v}_2, \tilde{v}_3), conv(\tilde{v}_1,\tilde{v}_3, \tilde{v}_4), conv(\tilde{v}_3,\tilde{v}_4, \tilde{v}_1).$$ Hence, the corresponding $2$-simplices in the triangulation are$\{1,2,3\}, \{1,3,4\}, \{1,4,5\}$.
\newline
\par The weight vector $\omega$ also induces a monomial ordering on $k[x_1,x_2,\dots, x_5]$. For example, $(0,0,2,0,1)\cdot (0,1,0,0,1)=1 \geq (1,0,0,2,0)\cdot (0,1,0,0,1)=0$, therefore $x_3^2x_5\succ x_1x_4^2.$ Similarly, $x_2^2x_4\succ x_1x_3^2, x_2^2x_5\succ x_1^2x_4.$ Using Macaulay2, one could also check that the generators above for $I_{\mathcal{A}}$ form a Grobner basis. Therefore $in_{\omega}(I_{\mathcal{A}}) = \langle x_3^2x_5, x_2^2x_4, x_2^2x_5\rangle.$

We see that the radical of the initial ideal is exactly the Stanley-Reisner ideal:
$$\sqrt{in_{\omega}(I_{\mathcal{A}})} =  \langle x_3x_5, x_2x_4, x_2x_5\rangle = \langle x_3,x_2\rangle \cap \langle x_5,x_2\rangle\cap \langle x_4,x_5\rangle = I_{\Delta_{\omega}}.$$
\end{exmp}
\vspace{10mm}
We have the following proposition relating the initial ideals of a toric ideal with respect to some monomial orderings and regular triangulations of the point configuration.

\begin{prop}[\cite{MR96} Theorem 8.3]
The regular triangulations of $\mathcal{A}$ are the initial complexes of the toric ideal $I_{\mathcal{A}}$. Let $\omega\in \mathbb{Z}^d$ be the weight vector. Then, the initial complex of $I_{\mathcal{A}}$ equals the triangulation of $\mathcal{A},$  $\Delta_{\omega}(I_{\mathcal{A}}) = \Delta_{\omega}$ . 
\end{prop}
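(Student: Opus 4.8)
The plan is to establish the equality $\Delta_{\omega}(I_{\mathcal{A}}) = \Delta_{\omega}$ face by face, by rewriting the defining condition of each side in a common language. On the algebraic side I will use the standard description of the initial ideal of a toric ideal: since $I_{\mathcal{A}}$ is spanned by binomials $x^u-x^v$ with $Au=Av$ and admits a Gr\"obner basis of such binomials, a monomial $x^a$ lies in $in_{\omega}(I_{\mathcal{A}})$ if and only if there is some $b\in\mathbb{N}^n$ with $Ab=Aa$ and $\omega\cdot b<\omega\cdot a$; equivalently, $x^a$ is standard exactly when $a$ is the $\omega$-minimal lattice point of its fiber under $A$. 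Since $\sqrt{in_{\omega}(I_{\mathcal{A}})}=I_{\Delta_{\omega}(I_{\mathcal{A}})}$, it follows that $\tau\subseteq[n]$ is a face of $\Delta_{\omega}(I_{\mathcal{A}})$ if and only if no monomial $x^a$ with $supp(a)\subseteq\tau$ belongs to $in_{\omega}(I_{\mathcal{A}})$. On the geometric side, for generic $\omega$ the set $\tau$ is a face of the regular triangulation $\Delta_{\omega}$ if and only if there is a vector $c$ with $v_i\cdot c=\omega_i$ for $i\in\tau$ and $v_j\cdot c\le\omega_j$ for $j\notin\tau$, namely the functional supporting the lower face spanned by $\{\hat v_i:i\in\tau\}$; this specializes to the strict inequality in the definition when $\tau$ is a maximal simplex.

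For the inclusion $\Delta_{\omega}\subseteq\Delta_{\omega}(I_{\mathcal{A}})$, I take $\tau\in\Delta_{\omega}$ with supporting vector $c$ as above and suppose toward a contradiction that some $x^a$ with $supp(a)\subseteq\tau$ lies in $in_{\omega}(I_{\mathcal{A}})$, witnessed by $b$ with $Ab=Aa$ and $\omega\cdot b<\omega\cdot a$. Pairing $Aa=Ab$ with $c$ gives $\sum_i a_i(v_i\cdot c)=\sum_i b_i(v_i\cdot c)$. The left-hand side equals $\omega\cdot a$ because $a$ is supported on $\tau$ and $v_i\cdot c=\omega_i$ there, while the right-hand side is at most $\omega\cdot b$ because $v_i\cdot c\le\omega_i$ for every $i$ and $b\ge 0$. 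Hence $\omega\cdot a\le\omega\cdot b$, contradicting $\omega\cdot b<\omega\cdot a$, so $\tau\in\Delta_{\omega}(I_{\mathcal{A}})$.

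The reverse inclusion $\Delta_{\omega}(I_{\mathcal{A}})\subseteq\Delta_{\omega}$ is where I expect the real work, and I will argue it by contraposition. If $\tau\notin\Delta_{\omega}$, then the system $\{\,v_i\cdot c=\omega_i\ (i\in\tau),\ v_j\cdot c\le\omega_j\ (j\notin\tau)\,\}$ in the unknown $c$ is infeasible, so by Farkas' lemma there are coefficients $\lambda_i\in\mathbb{R}$ $(i\in\tau)$ and $\mu_j\ge 0$ $(j\notin\tau)$, not all zero, with $\sum_{i\in\tau}\lambda_i v_i+\sum_{j\notin\tau}\mu_j v_j=0$ and $\sum_{i\in\tau}\lambda_i\omega_i+\sum_{j\notin\tau}\mu_j\omega_j<0$. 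Assembling these into a single vector and clearing denominators (all data being rational) yields an integer $r\in\mathbb{Z}^n$ with $Ar=0$ and $\omega\cdot r<0$ whose negative part $r^-$ is supported inside $\tau$, precisely because the off-$\tau$ coordinates $\mu_j$ are nonnegative. Writing $r=r^+-r^-$, the binomial $x^{r^+}-x^{r^-}$ lies in $I_{\mathcal{A}}$ with initial term $x^{r^-}$; and since the first row of $A$ is the all-ones vector, $Ar=0$ forbids $r$ from being nonnegative, so $r^-\ne 0$. Thus $x^{r^-}$ is a nonzero monomial supported on $\tau$ lying in $in_{\omega}(I_{\mathcal{A}})$, which shows $\tau\notin\Delta_{\omega}(I_{\mathcal{A}})$. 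The main obstacle is arranging the Farkas certificate into exactly this shape and verifying that its support lands in $\tau$; once this is in place the two inclusions give $\Delta_{\omega}(I_{\mathcal{A}})=\Delta_{\omega}$. The first assertion of the proposition --- that the regular triangulations are exactly the initial complexes --- then follows, since as $\omega$ ranges over generic weight vectors one side produces every regular triangulation and the other every initial complex, matched by the equality just proved; throughout I assume $\omega$ generic, so that $in_{\omega}(I_{\mathcal{A}})$ is a monomial ideal and $\Delta_{\omega}$ is a genuine triangulation.
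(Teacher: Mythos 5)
Your proposal is correct, but note that the paper itself gives no proof of this proposition at all: it is quoted verbatim from [MR96, Theorem 8.3] (Sturmfels, \emph{Gr\"obner Bases and Convex Polytopes}) and used as a black box. So the comparison is really with the textbook argument, and what you have written is essentially a faithful reconstruction of it: both directions hinge on the integer-programming description of $in_{\omega}(I_{\mathcal{A}})$ (a monomial $x^a$ lies in the initial ideal iff $a$ is not $\omega$-minimal in its fiber $A^{-1}(Aa)\cap\mathbb{N}^n$, which as you note follows from the existence of a binomial Gr\"obner basis) together with linear-programming duality. Your easy inclusion $\Delta_{\omega}\subseteq\Delta_{\omega}(I_{\mathcal{A}})$ is the standard weak-duality pairing, and your contrapositive for the reverse inclusion is exactly the Farkas-certificate-to-binomial-witness construction; the detail where you use the all-ones first row of $A$ (i.e.\ the paper's standing assumption $\mathcal{A}\subset\{1\}\times\mathbb{Z}^d$) to force $r^-\neq 0$ is a genuine and necessary check, correctly handled. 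Two points are stated rather than proved and deserve acknowledgment as imported facts: (i) the equivalence between the paper's strict-inequality definition of maximal cells of $\Delta_{\omega}$ and your weak-inequality characterization of arbitrary faces (true for generic $\omega$, since the exposed set $\{i: v_i\cdot c=\omega_i\}$ is then a simplex of the triangulation); and (ii) in your final sentence, the claim that every initial complex is realized by a \emph{generic} weight vector, which requires the standard fact that any term order agrees with some generic $\omega$ on a fixed ideal. Neither is a gap in substance, and your write-up supplies precisely what the paper delegates to the citation.
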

\begin{cor}\label{min}
The minimal primes of $in_{\omega}(I_{\mathcal{A}})$ are of the form $\langle x_{i }| i\notin \sigma\rangle$ where $\sigma\in \Delta_{\omega}$ is a simplex with maximal dimension.
\end{cor}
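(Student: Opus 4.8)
The plan is to deduce this corollary directly from the preceding proposition together with the structural facts about initial complexes and Stanley--Reisner ideals recalled earlier in Section 2. The corollary asks us to identify the minimal primes of $in_{\omega}(I_{\mathcal{A}})$, so the first reduction I would make is to pass from $in_{\omega}(I_{\mathcal{A}})$ to its radical: since a monomial ideal and its radical have exactly the same minimal primes, it suffices to describe the minimal primes of $\sqrt{in_{\omega}(I_{\mathcal{A}})}$. By the characterization recalled just before the pentagon example, this radical is precisely the Stanley--Reisner ideal $I_{\Delta_{\omega}(I_{\mathcal{A}})}$ of the initial complex, and by the proposition we have $\Delta_{\omega}(I_{\mathcal{A}}) = \Delta_{\omega}$, the regular triangulation. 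Thus the problem is reduced to a purely combinatorial statement about the Stanley--Reisner ideal of a triangulation.

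Next I would invoke the second description of the Stanley--Reisner ideal given in the excerpt, namely the primary decomposition
\[
I_{\Delta} = \bigcap_{\sigma \in \Delta} \langle x_i \mid i \in [n] \setminus \sigma \rangle .
\]
This exhibits $I_{\Delta_{\omega}}$ as an intersection of monomial primes $P_{\sigma} := \langle x_i \mid i \notin \sigma\rangle$ indexed by faces $\sigma$ of $\Delta_{\omega}$. The minimal primes of the intersection are then the minimal elements, under inclusion, of the set $\{P_{\sigma} : \sigma \in \Delta_{\omega}\}$. The key observation is that $P_{\sigma} \subseteq P_{\tau}$ if and only if $\sigma \supseteq \tau$, so that $P_{\sigma}$ is minimal exactly when $\sigma$ is a maximal face of $\Delta_{\omega}$. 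Since $\Delta_{\omega}$ is a triangulation of the full-dimensional configuration $\mathcal{A}$, its maximal faces are precisely the $d$-dimensional simplices with $d+1$ vertices; all maximal faces share this dimension. This yields exactly the asserted form $\langle x_i \mid i \notin \sigma\rangle$ for $\sigma$ a maximal-dimensional simplex of $\Delta_{\omega}$.

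The only point requiring a little care, and the step I would flag as the main obstacle, is the reduction to a nonredundant intersection: the decomposition $I_{\Delta} = \bigcap_{\sigma} P_{\sigma}$ ranges over \emph{all} faces $\sigma$, not only the maximal ones, so one must verify that dropping the non-maximal faces does not change the intersection and that the surviving primes are genuinely minimal with no further redundancy. This is the standard fact that in the decomposition of $I_{\Delta}$ the redundant components are exactly those $P_{\sigma}$ with $\sigma$ contained in a larger face, and that the facets give an irredundant primary decomposition; I would either cite this directly or spell out the inclusion-reversing correspondence $\sigma \mapsto P_{\sigma}$ and note that equidimensionality of a triangulation makes all facets maximal of the same dimension $d$. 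Everything else is formal, so the proof should be short.
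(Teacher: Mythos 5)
Your proof is correct and follows essentially the same route as the paper's: both reduce to the radical, identify it via the proposition with the Stanley--Reisner ideal $\bigcap_{\sigma\in\Delta_{\omega}}\langle x_i \mid i\in[n]\setminus\sigma\rangle$ of the regular triangulation, and then read off the minimal primes from the inclusion-reversing correspondence $\sigma\mapsto\langle x_i\mid i\notin\sigma\rangle$. Your additional care about irredundancy and about the distinction between maximal faces and maximal-dimensional simplices only makes explicit what the paper's shorter argument leaves implicit.
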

\begin{proof}
Taking the Stanley-Reisner ideals corresponding to these simplicial complexes, we get $$\sqrt{in_{\omega}(I_{\mathcal{A}})} = \bigcap_{\sigma\in \Delta }\langle x_i|i\in [n]\backslash \sigma\rangle .$$
Note that if $\tau\prec \sigma,$ then $\langle  x_i| i\in [n]\backslash \sigma\rangle \subseteq \langle x_i|i\in [n]\backslash \tau  \rangle.$ 
Therefore, the minimal prime ideals of $in_{\omega}(I_{\mathcal{A}})$ are exactly ideals of the form, $$\langle x_i|i\in [n]\backslash \sigma, \text{where } \sigma \text{ is a maximal face of }\Delta \rangle.$$
\end{proof}
Hence, each maximal face in the subdivision corresponds to a minimal prime of the initial ideal. In the case when $\omega$ is generic, these maximal faces are $d-$simplices. The correspondence is captured in the following:
\begin{prop}[\cite{MR96} Theorem 8.8]\label{vol}
Let $\sigma$ be any $d$-simplex in the regular triangulation $\Delta_{\omega}(I_{\mathcal{A}}).$ The normalized volume of $\sigma $ equals the multiplicity of the prime ideal $\langle x_i:i\notin \sigma \rangle$ in $in_{\omega}(I_{\mathcal{A}}).$
\end{prop}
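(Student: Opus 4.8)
The plan is to compute the multiplicity locally at $P_\sigma$ and then match it, through the toric structure of $I_{\mathcal{A}}$, with the index of the lattice spanned by the vertex rays of $\sigma$. First I would reduce the algebraic multiplicity to a colength count. Since $\omega$ is generic and $\sigma$ is a $d$-simplex, Corollary \ref{min} shows that $P_\sigma = \langle x_i : i\notin\sigma\rangle$ is a minimal prime of $M := in_\omega(I_{\mathcal{A}})$, of codimension $n-(d+1)$, so that $\operatorname{mult}_{P_\sigma}(M) = \operatorname{length}_{R_{P_\sigma}}\big((R/M)_{P_\sigma}\big)$. Localizing at $P_\sigma$ inverts the variables $x_j$ with $j\in\sigma$, and for a monomial ideal this amounts to substituting $x_j = 1$ for $j\in\sigma$. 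Writing $\bar M\subseteq k[x_i : i\notin\sigma]$ for the resulting monomial ideal, minimality of $P_\sigma$ forces $\bar M$ to be Artinian, and
$$\operatorname{mult}_{P_\sigma}(M) = \dim_k k[x_i : i\notin\sigma]/\bar M,$$
the number of monomials in the variables $\{x_i : i\notin\sigma\}$ that remain standard after the substitution. (The substitution, rather than merely counting standard monomials of $M$ supported off $\sigma$, is essential: the latter count can be infinite, as the example $M=\langle x_1x_2\rangle$ with $\sigma=\{2\}$ already shows.)

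Next I would translate this count into lattice data using the defining feature of a toric ideal: in each fiber $\{a\in\mathbb{N}^n : Aa = b\}$ there is exactly one standard monomial of $M$, namely the $\omega$-minimal one, because $R/I_{\mathcal{A}}\cong k[\mathbb{N}\mathcal{A}]$ has one-dimensional graded pieces. Thus the standard monomials of $M$ are in bijection with the semigroup $\mathbb{N}\mathcal{A}$ via $x^a\mapsto Aa$. Let $L_\sigma := \sum_{i\in\sigma}\mathbb{Z} v_i$, a finite-index subgroup of $\mathbb{Z}\mathcal{A}$. The heart of the argument is to show that the assignment $\prod_{i\notin\sigma} x_i^{a_i}\mapsto \big(\sum_{i\notin\sigma} a_i v_i\big) + L_\sigma$ restricts to a bijection between the standard monomials of $\bar M$ and the finite group $\mathbb{Z}\mathcal{A}/L_\sigma$. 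Surjectivity would use that $\mathbb{N}\mathcal{A}$ generates $\mathbb{Z}\mathcal{A}$ as a group together with normal-form reduction; injectivity would use that two exponent vectors congruent modulo $L_\sigma$ can, after multiplying by a high power of $\prod_{j\in\sigma} x_j$, be placed in a common $A$-fiber, hence reduce to the same $\omega$-minimal monomial and therefore agree once the $\sigma$-variables are set to $1$.

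Granting the bijection, $\operatorname{mult}_{P_\sigma}(M) = [\mathbb{Z}\mathcal{A} : L_\sigma]$, which equals $|\det A_\sigma|$ for the submatrix $A_\sigma=[v_i : i\in\sigma]$. This is exactly the normalized volume of $\sigma$: since the $v_i$ lie in $\{1\}\times\mathbb{Z}^d$, expanding the determinant of the $1$-augmented matrix computes the $d!$-multiple of the Euclidean volume of the simplex (relative to the lattice $\mathbb{Z}\mathcal{A}$, which coincides with the Euclidean normalization when $\mathbb{Z}\mathcal{A}=\mathbb{Z}^{d+1}$, as in Example \ref{example}).

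The main obstacle I anticipate is making the bijection rigorous, rather than any single computation. Because $I_{\mathcal{A}}$ is prime while its initial ideal is not, the multiplicity genuinely lives in the degeneration and cannot be read off $I_{\mathcal{A}}$ directly; a global degree-plus-additivity argument would only recover the sum $\sum_\sigma \operatorname{mult}_{P_\sigma}(M)$ over all maximal simplices, so a local argument at each $P_\sigma$ is unavoidable. The delicate points are tracking the $\omega$-minimal normal form through the substitution $x_j\mapsto 1$ and controlling the nonnegativity of exponents when clearing denominators in the relations $\det A_\sigma\cdot v_j = \sum_{i\in\sigma}\lambda_{ij} v_i$ for $j\notin\sigma$. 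I would organize this bookkeeping around the half-open parallelepiped $\{\sum_{i\in\sigma}t_i v_i : 0\le t_i < 1\}$, whose lattice points form a transversal of $\mathbb{Z}\mathcal{A}/L_\sigma$, giving each coset a canonical representative compatible with the monomial reduction.
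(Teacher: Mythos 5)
The paper never proves this proposition: it is imported wholesale from \cite{MR96} (Theorem 8.8) and used as a black box, so there is no internal argument to measure you against; what you have written is essentially a reconstruction of the standard textbook proof. Most of your skeleton is sound. The reduction of $\operatorname{mult}_{P_\sigma}$ to $\dim_k k[x_i : i\notin\sigma]/\bar{M}$, where $M:=in_{\omega}(I_{\mathcal{A}})$ and $\bar{M}$ is its image under $x_j\mapsto 1$ for $j\in\sigma$, is correct (and your warning that one must substitute rather than count standard monomials supported off $\sigma$ is well taken); the one-standard-monomial-per-fiber principle is exactly preservation of the $\mathcal{A}$-graded Hilbert function under Gr\"obner degeneration; and your injectivity argument is complete as stated, since $\bar{M}$-standardness of $a$ means precisely that \emph{every} padding of $a$ by $\sigma$-exponents stays outside $M$, so two congruent representatives can be padded into a common fiber and must coincide.

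The genuine soft spot is surjectivity, and it is softer than your sketch acknowledges: ``normal-form reduction'' of a nonnegative representative does not work, because the off-$\sigma$ part of an $M$-standard monomial need not be $\bar{M}$-standard. The paper's own Example \ref{example} shows this: for $\sigma=\{1,4,5\}$ one has $x_2^2\notin M=\langle x_3^2x_5,\,x_2^2x_4,\,x_2^2x_5\rangle$, so $x_2^2$ is the standard monomial of its fiber, yet $x_2^2\in\bar{M}=\langle x_2^2,x_3^2\rangle$ because $x_2^2x_4\in M$. So reducing a representative to its $\omega$-minimal form and stripping the $\sigma$-variables can land you outside the set you are trying to hit. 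The repair is to work with the saturation $(M:(\prod_{j\in\sigma}x_j)^{\infty})$, equivalently to take the $\omega$-minimal point of the \emph{extended} fiber $\{u: Au=b,\ u_i\ge 0 \text{ for } i\notin\sigma,\ u_j\in\mathbb{Z} \text{ for } j\in\sigma\}$; such a minimum exists because the certificate vector $c$ exhibiting $\sigma$ as a face of the regular triangulation ($v_j\cdot c=\omega_j$ on $\sigma$, $v_i\cdot c<\omega_i$ off $\sigma$) makes $\omega$ strictly positive on the recession cone of the extended fiber, and the off-$\sigma$ part of that minimizer is then $\bar{M}$-standard in the prescribed coset. With this fix your bijection, and hence $\operatorname{mult}_{P_\sigma}(M)=[\mathbb{Z}\mathcal{A}:L_\sigma]$, goes through; your closing caveat is also right, and worth keeping, since the index equals $d!$ times Euclidean volume only when $\mathbb{Z}\mathcal{A}=\mathbb{Z}^{d+1}$, a normalization issue the paper itself glosses over.
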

\begin{exmp}[Continuation of Example \ref{example}]
It is easy to compute the primary decomposition of $in_{\omega}(I_{\mathcal{A}}) = \langle x_3^2x_5, x_2^2x_4, x_2^2x_5\rangle,$ which is $in_{\omega}(I_{\mathcal{A}})= \langle x_2^2, x_3^2\rangle \cap  \langle x_2^2, x_5\rangle \cap \langle x_4, x_5 \rangle$.  Then the multiplicities of the minimal primes are $m_{in_{\omega}(I_{\mathcal{A}})}(\langle x_2^2,x_3^2\rangle)= dim_k(k[x_2,x_3]/\langle x_2^2,x_3^2\rangle) =4$, $m_{in_{\omega}(I_{\mathcal{A}})}(\langle x_2^2,x_5\rangle) = 2,$ $m_{in_{\omega}(I_{\mathcal{A}})}(\langle x_4,x_5\rangle)=1,$ which are $2!$ multiples of the volumes of the simplices $\{1,2,3\},\{3,4,1\},\{4,5,1\}.$
\end{exmp}
\vspace{10mm}
\newpage
\section{Multidegree Polynomial}

\begin{defn}[\cite{MR05}, Definition 8.43]
Let $S= k[x_1,x_2,\dots,x_n]$ be the polynomial ring, with the multigrading $deg(x_i)=a_i$. Let $\mathcal{C}:M\mapsto \mathcal{C}(M;t)$ be the function from a finitely generated $S$-module to $\mathbb{Z}[t_1,t_2,\dots,t_d]$, satisfying the following two properties:
\begin{enumerate}
\item(Additivity) $$\mathcal{C}(M;t) = \sum_{k=1}^r mult_{\mathfrak{p}_k}\mathcal{C}(S/\mathfrak{p}_k;t),$$
where $\mathfrak{p}_1,\dots, \mathfrak{p}_r$ are the maximal associated primes of $M.$
\item (Degenerativity)
\newline
if $M := \mathcal{F}/K$ is a graded free presentation and $in(M):=\mathcal{F}/in(K)$ for some term order,
$$\mathcal{C}(M;t) = \mathcal{C}(in(M);t).$$
\end{enumerate}
\end{defn}

\begin{prop}[\cite{MR05} Theorem 8.44]\label{lin}
There is exactly one such function satisfying
$$\mathcal{C}(S/\langle x_{i_1},\dots x_{i_r}\rangle;t) = \langle a_{i_1},t\rangle\cdots\langle a_{i_r},t \rangle$$
for all prime monomial ideals $\langle x_{i_1},\dots,x_{i_r}\rangle$, where $\langle a,t \rangle$ is the usual dot product.
\end{prop}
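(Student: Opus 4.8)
The statement has two halves—existence and uniqueness—and I would treat them separately: uniqueness is forced directly by the three properties, while existence requires exhibiting an explicit construction, which I would carry out through multigraded Hilbert series.

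For \emph{uniqueness}, I would show that the value $\mathcal{C}(M;t)$ is already determined on every finitely generated module. Given a graded free presentation $M=\mathcal{F}/K$, the Degenerativity axiom reduces the computation to $\mathcal{C}(in(M);t)$ with $in(M)=\mathcal{F}/in(K)$. Since $in(K)$ is a monomial submodule, every associated prime of $in(M)$ is a monomial prime $\langle x_i : i\in\tau\rangle$. Applying Additivity then expresses $\mathcal{C}(in(M);t)$ as a sum, over the maximal-dimensional associated primes $\mathfrak{p}_k$, of $\text{mult}_{\mathfrak{p}_k}\,\mathcal{C}(S/\mathfrak{p}_k;t)$, and the prescribed value on monomial primes pins down each term. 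Hence any two functions satisfying the axioms together with the normalization must agree on all modules.

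For \emph{existence}, I would define the candidate through the $K$-polynomial. Writing the $\mathbb{Z}^d$-graded Hilbert series of $M$ as
$$H(M;s)=\frac{K(M;s)}{\prod_{i=1}^n(1-s^{a_i})},\qquad s^{a}:=s_1^{a_1}\cdots s_d^{a_d},$$
with $K(M;s)\in\mathbb{Z}[s_1^{\pm1},\dots,s_d^{\pm1}]$, I set $\mathcal{C}(M;t)$ equal to the sum of the lowest-total-degree terms of $K(M;1-t)$, obtained by the substitution $s_l\mapsto 1-t_l$. The normalization is then a direct computation: for $\langle x_{i_1},\dots,x_{i_r}\rangle$ the $K$-polynomial is $\prod_{j=1}^r(1-s^{a_{i_j}})$, and since the lowest-degree part of $1-s^{a}$ under $s=1-t$ is the linear form $\langle a,t\rangle$, the product yields $\prod_{j=1}^r\langle a_{i_j},t\rangle$. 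Degenerativity holds because a Gröbner degeneration preserves the multigraded Hilbert function, hence the $K$-polynomial, and therefore $\mathcal{C}(M;t)=\mathcal{C}(in(M);t)$.

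The Additivity axiom is where the real work lies, and I expect it to be the main obstacle. My plan is to fix a prime filtration $0=M_0\subset M_1\subset\cdots\subset M_\ell=M$ with successive quotients $M_i/M_{i-1}\cong(S/\mathfrak{p}_i)(-b_i)$ for primes $\mathfrak{p}_i$ and graded shifts $b_i$. Because the $K$-polynomial is additive on short exact sequences, $K(M;s)=\sum_i s^{b_i}K(S/\mathfrak{p}_i;s)$. Two facts must then be combined: first, the lowest-degree part of $K(S/\mathfrak{p};1-t)$ has total degree equal to the codimension of $\mathfrak{p}$; second, the shift factor $s^{b_i}$ under $s=1-t$ has constant term $1$ and so does not disturb the lowest-degree part. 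Consequently only the minimal primes of maximal dimension survive in the lowest-degree component, each contributing with multiplicity equal to the number of times it appears in the filtration. The delicate points are establishing this codimension-equals-lowest-degree statement and verifying that the count of each maximal-dimensional prime in a prime filtration is independent of the chosen filtration and equals $\text{mult}_{\mathfrak{p}}(M)=\text{length}_{S_\mathfrak{p}}(M_\mathfrak{p})$; localizing at such a $\mathfrak{p}$, where all other filtration primes disappear, is the clean way to identify this count with the geometric multiplicity and thereby recover the Additivity formula exactly.
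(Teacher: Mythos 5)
The paper gives no proof of this proposition at all---it is quoted directly from Miller--Sturmfels \cite{MR05} (Theorem 8.44)---so there is no in-paper argument to compare against; the relevant benchmark is the proof in that reference, which your proposal is in effect reconstructing, and your skeleton does match it. The uniqueness half is correct and complete: Degenerativity passes to an initial module, a term-generated submodule of a free module splits as a direct sum of (shifted) monomial ideals, so all associated primes of $in(M)$ are monomial primes, and Additivity plus the normalization then pins down the value. Your existence construction via the $K$-polynomial is also the right one (it is exactly the alternative definition the paper records in its Remark, citing \cite{MR05}, Definition 8.45); the normalization computation $K(S/\langle x_{i_1},\dots,x_{i_r}\rangle;s)=\prod_j(1-s^{a_{i_j}})$ and the Degenerativity check via invariance of the multigraded Hilbert series are both fine, and your localization argument identifying the number of occurrences of a maximal-dimensional prime in a prime filtration with $\mathrm{length}_{S_\mathfrak{p}}(M_\mathfrak{p})$ is the correct standard device.

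The genuine gap is in Additivity, at exactly the point you flag but do not resolve, and it is the mathematical heart of the theorem: \emph{cancellation}. Your argument needs that the degree-$\mathrm{codim}(M)$ contributions $\mathrm{mult}_{\mathfrak{p}}\cdot\prod_j\langle a_{i_j},t\rangle$ coming from distinct filtration primes of minimal codimension cannot sum to zero; if they could, the lowest-degree part of $K(M;1-t)$ would sit in degree strictly above $\mathrm{codim}(M)$ and your candidate function (defined as the lowest-degree part) would simply violate the Additivity axiom. For a single monomial prime there is no issue (a product of nonzero linear forms is nonzero), but for a sum over several primes this is not formal: in an arbitrary $\mathbb{Z}^d$-grading the forms $\langle a_i,t\rangle$ can have coefficients of mixed sign, and positive combinations of their products can vanish. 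Both of your ``delicate points'' reduce to this same issue---even the claim that the lowest-degree part of $K(S/\mathfrak{p};1-t)$ for a non-monomial prime $\mathfrak{p}$ has degree exactly $\mathrm{codim}(\mathfrak{p})$ is proved by degenerating to $in(\mathfrak{p})$ and filtering by monomial primes, where the minimal-codimension terms must again be shown not to cancel. What rescues the argument is the standing hypothesis (implicit in the paper, explicit in \cite{MR05}) that the multigrading is \emph{positive}, together with the positivity theorem for multidegrees proved in \cite{MR05}; this is also why \cite{MR05} defines the multidegree as the degree-$\mathrm{codim}(M)$ component of $K(M;1-t)$ rather than its lowest-degree part---the two descriptions agree only after positivity is established. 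So: faithful outline of the standard proof, uniqueness done, but the step you yourself call the main obstacle is the actual content of the theorem and remains unproven.
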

Consider the polynomial ring $k[x_1,x_2,\dots, x_n]$ with a fixed multigrading $deg(x_i)=a_i$. The multidegree polynomial with respect to this grading is defined as the unique function $\mathcal{C}$ that satisfies the condition in the proposition above.

\begin{remark}
Multidegree polynomial can also be defined as the $codim(M)$ terms of $\mathcal{K}(M;1-t),$ the $\mathcal{K}$-polynomial with respect to a fixed $(a_{i_1},\dots, a_{i_r})$-multigrading (\cite{MR05} Definition 8.45). In this paper, we will not make use of this fact. 
\end{remark}

\vspace{5mm}
\begin{proof}[Proof of  Theorem \ref{main}]
Expand the multidegree polynomial by first passing to the initial ideal, then using additivity:
$$\mathcal{C}(S/I_{\mathcal{A}};t) = \mathcal{C}(S/in_{\omega}(I_{\mathcal{A}})) =  \sum_{k=1}^r mult_{S/in_{\omega}(I_{\mathcal{A}})}(\mathfrak{p}_k) \mathcal{C}(S/\mathfrak{p}_k;t),$$
where $\mathfrak{p}_i$ are the maximal associated primes of $S/in_{\omega}(I_{\mathcal{A}})$, that are the minimal prime ideals of $in_{\omega}(I_{\mathcal{A}})$. When $\omega$ is generic, $in_{\omega}(I_{\mathcal{A}})$ is a monomial ideal. Then, by Corollary \ref{min}, the minimal primes are monomial prime ideals of the form $$\langle x_i|i\in [n] \backslash \sigma, \text{where } \sigma \text{ is a maximal simplex in }\Delta \rangle.$$
Then,  by Proposition \ref{vol}, $mult_{S/in_{\omega}(I_{\mathcal{A}})}(\mathfrak{p}_k) = vol(\sigma)$ the normalized volume and by Proposition \ref{lin}, $$\mathcal{C}(S/\mathfrak{p}_k;t) =  \mathcal{C}(S/\langle x_i| i\in \sigma \rangle)= \prod_{v\notin V}(v\cdot x).$$Then $\mathcal{C}(S/I_{\mathcal{A}};t) =adj_C(x).$
\end{proof}
\vspace{5mm}
The notion of the covolume polynomial was introduced in \cite{Alu24}.
\newline
Let $X\subseteq \mathbb{P}:= \mathbb{P}^{n_1}\times\dots \mathbb{P}^{n_l} $ be an irreducible subvariety. Then in $A_{*}(\mathbb{P}),$
$$[X] = \sum_{\substack{i_1,\cdots,i_l \\ i_1+\dots+i_n=codim(X)}}a_{i_1,\dots i_{l}}h_1^{i_1}\cdots h_l^{i_l}\cap [\mathbb{P}].$$
For variables $t_1,\dots, t_l,$ one can associate with $[X]$ the polynomial $$P_{[X]}(t_1,\dots, t_l) =\sum_{\substack{i_1,\cdots,i_l \\ i_1+\dots+i_n=codim(X)}}a_{i_1,\dots i_{l}}t_1^{i_1}\cdots t_l^{i_l} .$$
\begin{defn}
A polynomial is called a covolume polynomial if it is the limit of polynomials of the form $cP_{[X]}(t)$ where $c$ is a positive real number.
\end{defn}
 THe covolume polynomial is then subsequently being studied in \cite{huh24} for it connection to the volume polynomial and algebraic matroids. The following example gives more insight into the name of the multidegree polynomial.

\begin{exmp}
Let $X\subseteq \mathbb{P}^{n_1}\times \mathbb{P}^{n_2} \dots \times \mathbb{P}^{n_l}$ be defined by the multihomogeneous ideal $I\subseteq R:=k[x_1,\dots, x_n]$ with respect to standard grading, i.e., $deg(x_i) = e_i\in \mathbb{Z}^n$. Then, 
$$P_{[X]}(t) = \mathcal{C}(R/I;t) = \sum_{\substack{0\leq i _j\leq n_j\\ i_1+i_2\dots+i_l=codim(X)}}deg_{\mathbb{P}}^{\mathbf{i}}(X)t_1^{i_1}\cdots t_l^{i_l},$$
where $deg_{\mathbb{P}}^{\mathbf{i}}(X)$ is the multidegree of $X$ of type $\mathbf{i}:=(i_1,i_2,\dots, i_l).$

\end{exmp}
\begin{prop}[\cite{cid24} Theorem 3.5]
Consider $R = k[x_1,\dots, x_n]$ with $\mathbb{N}^{d}$-grading, and $I$ a prime ideal, homogeneous with respect to the multigrading. Then, $\mathcal{C}(R/I;t)$ is a covolume polynomial.
\end{prop}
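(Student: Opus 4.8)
The plan is to realize $\mathcal{C}(R/I;t)$ as a point of the closed convex cone generated by the normalized multidegree polynomials $P_{[X]}$ of irreducible subvarieties of products of projective spaces. First I would reduce, exactly as in the proof of Theorem~\ref{main}, to a statement about products of positive linear forms. Choosing a generic weight vector $\omega$ and invoking the degenerativity axiom gives $\mathcal{C}(R/I;t)=\mathcal{C}(R/\mathrm{in}_\omega(I);t)$. Since $\mathrm{in}_\omega(I)$ is monomial, applying additivity over its top-dimensional associated primes $\mathfrak{p}_\sigma=\langle x_i : i\notin\sigma\rangle$ together with Proposition~\ref{lin} yields
\[
\mathcal{C}(R/I;t)=\sum_{\sigma} m_\sigma \prod_{i\notin\sigma}\langle a_i,t\rangle,
\]
where the $m_\sigma=\mathrm{mult}_{\mathfrak{p}_\sigma}(R/\mathrm{in}_\omega(I))$ are positive integers, each $a_i\in\mathbb{N}^{d}$, and every product has exactly $c=\mathrm{codim}(I)$ factors (the contributing associated primes all have codimension $c$). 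Thus it suffices to prove that every nonnegative integer combination of products of $c$ linear forms $\langle a_i,t\rangle$ with $a_i\in\mathbb{N}^d$ is a covolume polynomial.

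The individual building blocks I would handle by a complete-intersection construction. Fix one product $\prod_{i\notin\sigma}\langle a_i,t\rangle$ and work in $\mathbb{P}:=\mathbb{P}^{N_1}\times\cdots\times\mathbb{P}^{N_d}$ with all $N_j$ large. For each $i\notin\sigma$ with $a_i\neq 0$, take a general hypersurface $H_i$ of multidegree $a_i$; this is possible precisely because $a_i\in\mathbb{N}^{d}$ makes $\mathcal{O}(a_i)$ globally generated, hence base-point-free. By Bertini's irreducibility theorem the successive general intersection $X_\sigma:=\bigcap_{i\notin\sigma}H_i$ is irreducible of codimension $c$, once the $N_j$ are large enough that $X_\sigma$ stays positive-dimensional. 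Its class in $A^{*}(\mathbb{P})=\mathbb{Z}[h_1,\dots,h_d]/(h_1^{N_1+1},\dots,h_d^{N_d+1})$ is $\prod_{i\notin\sigma}\langle a_i,h\rangle$, and by choosing the $N_j$ large enough that the truncations $h_j^{N_j+1}$ do not meet the finitely many degree-$c$ monomials occurring, one gets $P_{[X_\sigma]}(t)=\prod_{i\notin\sigma}\langle a_i,t\rangle$ exactly. Hence each building block is a covolume polynomial (a factor $a_i=0$ simply kills that $\sigma$, consistently contributing nothing).

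It remains to show that a nonnegative combination of covolume polynomials of the same degree $c$ is again covolume, i.e. that the covolume polynomials of fixed degree form a closed convex cone. Closure under positive scaling and under limits is immediate from the definition, so the heart of the matter is closure under addition. Given irreducible $X,Y$ of codimension $c$, I would embed both into a common product $\prod_j\mathbb{P}^{M_j}$ of large projective spaces with their multidegree polynomials preserved, move them by a general automorphism so that $X+Y$ is a genuine effective cycle of class $P_{[X]}+P_{[Y]}$, and then realize a positive multiple of this class by an \emph{irreducible} $Z$ obtained by smoothing/connecting the union inside the high-dimensional ambient space, so that $\tfrac1\lambda P_{[Z]}\to P_{[X]}+P_{[Y]}$. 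This closure-under-addition step is the main obstacle: it is where genuine algebraic geometry enters, where the log-concavity constraints satisfied by multidegrees of irreducible varieties must be respected, and where the large ambient dimension is essential (so that the connecting locus has codimension $>c$ and does not alter the class). I expect this to be the technical core of \cite{cid24}.

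Finally, a remark on the hypothesis. The reduction above uses only homogeneity, so this route in fact proves the statement for every homogeneous $I$; the role of the prime hypothesis is to permit a more direct realization, in which $V(I)$ is irreducible and one reads $\mathcal{C}(R/I;t)$ as the torus-equivariant class of $V(I)$ in $\mathbb{A}^n$. That class is represented by the irreducible associated bundle $\bigoplus_i \mathcal{O}(a_i)$ over a large $\prod_j\mathbb{P}^{N_j}$, whose irreducibility is inherited directly from primality; embedding this bundle into a product of projective spaces (using $a_i\in\mathbb{N}^{d}$) and passing to a Veronese limit would then exhibit $\mathcal{C}(R/I;t)$ as a covolume polynomial without the convex-cone detour. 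Either way, the irreducible realization, and the limit inherent in the definition of covolume, is the crux.
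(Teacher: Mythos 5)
This proposition is imported by the paper from \cite{cid24} without proof, so there is no in-paper argument to compare against; your proposal has to stand on its own, and it does not. The degeneration and additivity steps are fine (they mirror the proof of Theorem \ref{main}), and the complete-intersection realization of a single product $\prod_{i\notin\sigma}\langle a_i,t\rangle$ as $P_{[X_\sigma]}$ is essentially correct, modulo routine Bertini care. The fatal flaw is the reduction target itself: the claim that every nonnegative integer combination of products of $c$ linear forms $\langle a_i,t\rangle$ with $a_i\in\mathbb{N}^d$ is a covolume polynomial is \emph{false}, so the step you defer as ``the main obstacle'' is not a hard technical point to be outsourced --- it cannot be proved. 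Concretely, take $R=k[x_1,x_2,x_3,x_4]$ with $\deg x_1=\deg x_2=(1,0)$, $\deg x_3=\deg x_4=(0,1)$, and $I=\langle x_1,x_2\rangle\cap\langle x_3,x_4\rangle$. This $I$ is homogeneous and radical, its two minimal primes have codimension $2$ and multiplicity $1$, and additivity plus Proposition \ref{lin} give $\mathcal{C}(R/I;t)=t_1^2+t_2^2$, a combination of exactly the kind you reduce to. But $t_1^2+t_2^2$ is not covolume: for any irreducible $X\subseteq\mathbb{P}^{n_1}\times\mathbb{P}^{n_2}$ of codimension $2$ with class $a_{20}h_1^2+a_{11}h_1h_2+a_{02}h_2^2$, the bidegrees are log-concave, $a_{11}^2\geq a_{20}a_{02}$ (Khovanskii--Teissier applied to the two nef hyperplane restrictions on $X$, equivalently the Lorentzian property of volume polynomials), and this inequality survives positive scaling and limits, whereas $t_1^2+t_2^2$ violates it. So covolume polynomials of a fixed degree are \emph{not} closed under addition, and no smoothing or connecting construction in a large ambient space can change that.

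The same example shows that your closing remark --- that the route ``proves the statement for every homogeneous $I$'' --- should have been read as a red flag rather than a bonus: the proposition genuinely fails for non-prime homogeneous ideals, so any argument using only homogeneity is necessarily wrong, and primality must enter essentially. In the Gr\"obner-degeneration picture, what primality buys is precisely the constraint your reduction discards: by the Kalkbrener--Sturmfels theorem, the initial complex of a prime ideal is pure and connected in codimension $1$, which rules out configurations such as $\langle x_1,x_2\rangle\cap\langle x_3,x_4\rangle$ above; a proof along these lines would have to exploit that connectivity when assembling the sum, not treat the summands independently. Your alternative sketch at the end (work directly with the irreducible variety $V(I)$ via a Borel-type associated-bundle construction) is closer in spirit to something workable, since it keeps irreducibility in play, but as written it is too vague to evaluate, and the ambient space it produces is a projective bundle over a product of projective spaces rather than a product of projective spaces itself, so it too needs genuine additional work before it yields a class of the form required by the definition of covolume polynomials.
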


\begin{cor}
Let $C\subseteq \mathbb{R}^{d+1}$ be a polyhedral cone, whose cross section with the hyperplane $\{x_0=1\}$ is  a polytope $P\subseteq \mathbb{R}^d.$ Furthermore, assume all vertices of the polytope lie in the positive orthant in the cross section of the polyhedral cone. Then the adjoint polynomial $adj_C$ is a covolume polynomial. 
\end{cor}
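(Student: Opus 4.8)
The plan is to combine Theorem \ref{main} with the Proposition of \cite{cid24}, reducing the general real case to an integral one by approximation. First I would dispose of the case in which the vertex rays of $C$ admit integral generators lying in the positive orthant. Writing $v_i=(1,p_i)$ for the generator over the $i$-th vertex $p_i$ of $P$, the hypothesis that $P$ lies in the positive orthant forces $v_i\in\mathbb{N}^{d+1}$, so the $\mathcal{A}$-grading on $S=k[x_1,\dots,x_n]$ is an honest $\mathbb{N}^{d+1}$-grading. Since the toric ideal $I_{\mathcal{A}}$ is prime and homogeneous for this grading, Theorem \ref{main} identifies $adj_C$ with $\mathcal{C}(S/I_{\mathcal{A}};t)$, and the Proposition of \cite{cid24} shows that this multidegree polynomial is a covolume polynomial.

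For arbitrary real vertices $p_i\in\mathbb{R}^{d}_{\ge 0}$ I would approximate, using that the set of covolume polynomials is closed, being by definition a set of limits. Fix one triangulation $T$ of $P$. Since a polytope keeps its vertices in convex position and its combinatorial type is stable under small perturbations of the vertices, there is a neighborhood of $(p_1,\dots,p_n)$ on which $T$ remains a valid non-degenerate triangulation and on which every simplex volume and every linear factor of the adjoint varies polynomially in the vertex positions. Choosing rational $p_i^{(k)}\in\mathbb{Q}^{d}_{\ge 0}$ in this neighborhood with $p_i^{(k)}\to p_i$ produces cones $C^{(k)}$ with rational height-one generators $v_i^{(k)}=(1,p_i^{(k)})$ still in the positive orthant, for which $T$ is still a triangulation, and $adj_{C^{(k)}}\to adj_C$ coefficientwise.

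To invoke the integral case for each $C^{(k)}$ I must clear denominators, which rescales the generators off the hyperplane $\{x_0=1\}$. Let $N_k\in\mathbb{N}$ be a common denominator of the $k$-th approximation, so that $w_i^{(k)}:=N_kv_i^{(k)}\in\mathbb{N}^{d+1}$ are integral generators of the same rays. Arguing as in the proof of Theorem \ref{main}, Propositions \ref{vol} and \ref{lin} express $\mathcal{C}(S/I_{\mathcal{A}_k};t)$ for $\mathcal{A}_k=\{w_i^{(k)}\}$ as $\sum_{\sigma}\lvert\det[w^{(k)}_{j_0},\dots,w^{(k)}_{j_d}]\rvert\prod_{i\notin\sigma}(w_i^{(k)}\cdot t)$. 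The key computation is how this scales: the coefficient is a $(d+1)\times(d+1)$ determinant, so it equals $N_k^{d+1}\lvert\det[v^{(k)}_{j_0},\dots,v^{(k)}_{j_d}]\rvert=N_k^{d+1}vol(\sigma)$ with $vol(\sigma)$ the normalized cross-sectional volume appearing in $adj_{C^{(k)}}$, while the $n-(d+1)$ linear factors contribute a further $N_k^{\,n-(d+1)}$; altogether $\mathcal{C}(S/I_{\mathcal{A}_k};t)=N_k^{\,n}\,adj_{C^{(k)}}$. The left-hand side is a covolume polynomial by the integral case, and covolume polynomials are closed under multiplication by positive scalars, so each $adj_{C^{(k)}}=N_k^{-n}\,\mathcal{C}(S/I_{\mathcal{A}_k};t)$ is a covolume polynomial.

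Finally I would pass to the limit: the $adj_{C^{(k)}}$ all lie in the finite-dimensional space of polynomials in $t_1,\dots,t_{d+1}$ of degree $n-(d+1)$, they converge there to $adj_C$, and the covolume polynomials form a closed set, so $adj_C$ is a covolume polynomial. I expect the approximation to be the main obstacle: one must keep the perturbed vertices in the positive orthant and preserve the combinatorial type of $P$ so that the single triangulation $T$ stays valid, hence so that the formula above really computes $adj_{C^{(k)}}$, while simultaneously controlling the factor $N_k^{\,n}$ so that the covolume property survives both the clearing of denominators and the passage to the limit.
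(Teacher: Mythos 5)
Your overall route is the same as the paper's: settle the case of integral height-one generators via Theorem \ref{main} together with the Proposition of \cite{cid24}; reduce rational generators to that case by clearing denominators; then pass from rational to arbitrary real vertices by approximation, using that the covolume polynomials form a closed set. Your explicit scaling computation in the rational step is correct and is a welcome elaboration of what the paper states in one line (``the adjoint polynomial differs by a positive scalar multiple from the multidegree of a toric ideal with nonnegative multigrading''): scaling the generators by $N_k$ multiplies each $(d+1)\times(d+1)$ determinant by $N_k^{d+1}$ and each of the $n-(d+1)$ linear factors by $N_k$, and positivity of the resulting scalar is all that is needed.

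The gap is exactly where you anticipated it: the convergence $adj_{C^{(k)}}\to adj_C$. You justify it by claiming that the combinatorial type of $P$, hence the validity of a fixed triangulation $T$, is stable under all sufficiently small perturbations of the vertices. This is false once $d\ge 3$ and $P$ has non-simplicial faces, which cannot be ruled out here since $P$ is given. Concretely, let $P$ be a $3$-cube and $T$ its standard triangulation into six tetrahedra sharing a main diagonal; perturb the top endpoint of that diagonal slightly inward. Each square facet adjacent to that vertex then folds along the diagonal \emph{opposite} to the one used by $T$, so the union of the perturbed simplices of $T$ misses three slivers of the new convex hull: $T$ is no longer a triangulation of the perturbed polytope, and the expression $\sum_{\sigma\in T}vol(\sigma)\prod_{v\notin V(\sigma)}(v\cdot t)$ is then not the adjoint polynomial by definition. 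So an arbitrary rational approximation of the vertices will not do as stated. The paper sidesteps this by citing \cite{kohn19}, Corollary 1.8, which gives precisely the continuity statement $\lim_{i\rightarrow\infty}adj_{C_i}=adj_C$ for cones $C_i\rightarrow C$. If you want to avoid that citation, one repair is to tie the approximation to a regular triangulation: fix a single weight vector $\omega$ generic for $\mathcal{A}$ and for all the approximating configurations; the defining conditions for a maximal simplex of $\Delta_\omega$ ($v_j\cdot c=\omega_j$ on $\sigma$, strict inequality off $\sigma$) are open in the vertex coordinates, so every simplex of $\Delta_\omega(\mathcal{A})$ persists in $\Delta_\omega(\mathcal{A}^{(k)})$ for $k$ large, while any additional simplices of $\Delta_\omega(\mathcal{A}^{(k)})$ have volume tending to $0$; computing $adj_{C^{(k)}}$ with $\Delta_\omega(\mathcal{A}^{(k)})$ then yields the desired convergence. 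Either way, this continuity step needs an argument; as written, your perturbation claim is the one incorrect link in an otherwise sound chain.
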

\begin{proof}
By Theorem \ref{main} and \cite{cid24} Theorem 3.5, when the vertices of the polytope are $\mathbb{N}^d$-points, $adj_{C}$ is a covolume polynomial.
If the vertices of $P$ are $\mathbb{Q}^d$-points that lie in the positive orthant, then one can scale the coordinates of the vertex rays of the polyhedral cone by a common multiple of the denominator in each coordinate. The resulting point configuration is integral. Then, the adjoint polynomial differs by a positive scalar multiple from the multidegree of a toric ideal with nonnegative multigrading. Hence, the adjoint polynomial is still a covolume polynomial.
Now, if the vertices are not rational, consider a sequence of polyhedral cones whose vertex rays are rational, $\{C_i\}_{i=0}^{\infty}$, such that $\lim_{i\rightarrow\infty}C_i = C$. By \cite{kohn19} Corollary 1.8, $\lim_{i\rightarrow\infty}adj_{C_i}(x) = adj_C(x)$. Therefore, $adj_{C}$ is also a covolume polynomial.
\end{proof}

\nocite{*}

\bibliographystyle{alpha}

\bibliography{references.bib}

\end{document}